\theoremstyle{plain}
\newtheorem{theorem}{Theorem}
\newtheorem{corollary}[theorem]{Corollary}
\newtheorem{lemma}[theorem]{Lemma}
\theoremstyle{definition}
\newtheorem{definition}[theorem]{Definition}
\theoremstyle{remark}
\begin{document}
\begin{center}
\textbf{Relatively Prime Sets, Divisor Sums, and Partial Sums}
\end{center}
\begin{center}
Prapanpong Pongsriiam\\
Department of Mathematics, Faculty of Science, Silpakorn University, Ratchamankanai Rd, Nakornpathom, Thailand, 73000.\\
 Email: prapanpong@gmail.com
\end{center}
\begin{center}
\textbf{Abstract}
\end{center}
For a nonempty finite set $A$ of positive integers, let $\gcd\left(A\right)$ denote the greatest common divisor of the elements of $A$. Let $f\left(n\right)$ and  $\Phi\left(n\right)$ denote, respectively, the number of subsets $A$ of $\left\{1, 2, \ldots, n\right\}$ such that $\gcd\left(A\right) = 1$ and the number of subsets $A$ of $\left\{1, 2, \ldots, n\right\}$ such that $\gcd\left(A\cup\left\{n\right\}\right) =1$. Let $D\left(n\right)$ be the divisor sum of $f\left(n\right)$. In this article, we obtain partial sums of $f\left(n\right)$, $\Phi\left(n\right)$ and $D\left(n\right)$. We also obtain a combinatorial interpretation and a congruence property of $D\left(n\right)$. We give open questions concerning $\Phi\left(n\right)$ and $D\left(n\right)$ at the end of this article.

\section{Introduction}
Unless stated otherwise, we let $d, k, n, N$ be positive integers, $A$ a nonempty finite set of positive integers, $\gcd\left(A\right)$ the greatest common divisor of the elements of $A$, $\left\lfloor x \right\rfloor$ the greatest integer less than or equal to $x$, and $\mu$ the M$\ddot{\text{o}}$bius function. 

	$A$ is said to be \textit{relatively prime} if $\gcd \left(A\right)=1$ and is said to be \textit{relatively prime to $n$} if $\gcd\left(A\cup\left\{n\right\}\right) = 1$. Let $f\left(n\right)$ and $\Phi\left(n\right)$ denote, respectively, the number of relatively prime subsets of $\left\{1, 2, \ldots, n\right\}$, and the number of nonempty subsets of $\left\{1, 2, \ldots, n\right\}$ relatively prime to $n$. In addition, we let $D\left(n\right)=\sum_{d\mid n}f\left(d\right)$ be the divisor sum of $f\left(n\right)$. The first 15 values of $f\left(n\right)$, $\Phi\left(n\right)$, and $D\left(n\right)$ are given in the table below.
	
\begin{table}[ht]
\begin{center}
\begin{tabular}{|c|c|c|c|c|}
\hline
$n$&$f\left(n\right)$&$\Phi\left(n\right)$&$D\left(n\right)$&$2^n$\\\hline
1&1&1&1&2\\
2&2&2&3&4\\
3&5&6&6&8\\
4&11&12&14&16\\
5&26&30&27&32\\
6&53&54&61&64\\
7&116&126&117&128\\
8&236&240&250&256\\
9&488&504&494&512\\
10&983&990&1012&1024\\
11&2006&2046&2007&2048\\
12&4016&4020&4088&4096\\
13&8111&8190&8112&8192\\
14&16238&16254&16357&16384\\
15&32603&32730&32635&32768\\
\hline
\end{tabular}
\end{center}
\caption{The first 15 values of $f\left(n\right)$, $\Phi\left(n\right)$, and $D\left(n\right)$.}\label{tablerela1}
\end{table}
\newpage
The purpose of this article is to obtain partial sums associated with $f\left(n\right)$, $\Phi\left(n\right)$, and $D\left(n\right)$ and use them to explain some phenomena appear in Table \ref{tablerela1}. We will also obtain a combinatorial interpretation and a congruence property of $D\left(n\right)$. An open problem arising from an observation on the values of $\Phi\left(n\right)$ and $D\left(n\right)$ is also given. By way of example, the formulas of the partial sums of $f\left(n\right)$, $\Phi\left(n\right)$, and $D\left(n\right)$  lead to the following results: (see Corollary \ref{1to6} for the proof),
\begin{equation}\label{relintrolimsupeq1}
\limsup_{N\rightarrow\infty} \frac{\left|\sum_{n\leq N}f\left(n\right)-2^{N+1}\right|}{2^{\frac{N}{2}}} = 3
\end{equation}
\begin{equation}\label{relintrolimsupeq2}
\liminf_{N\rightarrow\infty} \frac{\left|\sum_{n\leq N}f\left(n\right)-2^{N+1}\right|}{2^{\frac{N}{2}}} = 2\sqrt 2
\end{equation}
\begin{equation}\label{relintrolimsupeq3}
\limsup_{N\rightarrow\infty} \frac{\left|\sum_{n\leq N}\Phi\left(n\right)-2^{N+1}\right|}{2^{\frac{N}{2}}} = 2
\end{equation}
\begin{equation}\label{relintrolimsupeq4}
\liminf_{N\rightarrow\infty} \frac{\left|\sum_{n\leq N}\Phi\left(n\right)-2^{N+1}\right|}{2^{\frac{N}{2}}} = \sqrt 2
\end{equation}
\begin{equation}\label{relintrolimsupeq5}
\limsup_{N\rightarrow\infty} \frac{\left|\sum_{n\leq N}D\left(n\right)-2^{N+1}\right|}{2^{\frac{N}{2}}} = \sqrt 2
\end{equation}
\begin{equation}\label{relintrolimsupeq6}
\liminf_{N\rightarrow\infty} \frac{\left|\sum_{n\leq N}D\left(n\right)-2^{N+1}\right|}{2^{\frac{N}{2}}} = 1
\end{equation}

\section{Preliminaries and Lemmas}
Let $E\left(n\right) = \sum_{d\mid n}\Phi\left(d\right)$ be the divisor sum of $\Phi\left(n\right)$. By the definition of $f\left(n\right)$, $\Phi\left(n\right)$, $D\left(n\right)$ and $E\left(n\right)$ and the results obtained by Nathanson \cite{Na}, the following holds
\begin{align}\label{relpreandlemeq1}
f\left(n\right)\leq \min\{\Phi\left(n\right),D\left(n\right)\}\leq \max\{\Phi\left(n\right),D\left(n\right)\}\leq E\left(n\right) =2^{n}-1\leq 2^n
\end{align}
Moreover $f\left(n\right)$ is asymptotic to $2^n$. So all functions above are asymptotic to $2^n$. In other words,
\begin{equation}\label{sim}
\lim_{n\rightarrow \infty}\frac{f\left(n\right)}{2^n} = \lim_{n\rightarrow \infty}\frac{\Phi\left(n\right)}{2^n} = \lim_{n\rightarrow \infty}\frac{D\left(n\right)}{2^n} = \lim_{n\rightarrow \infty}\frac{E\left(n\right)}{2^n} = 1
\end{equation} 
So basically, $f\left(n\right)$, $\Phi\left(n\right)$, $D\left(n\right)$, and $E\left(n\right)$ are very closed to $2^n$ as $n\rightarrow \infty$. Which one is closer? We see from (\ref{relpreandlemeq1}) that $\Phi\left(n\right)$ and $D\left(n\right)$ are closer to $2^n$ than $f\left(n\right)$. In addition, $E\left(n\right)$ is closer to $2^n$ than $\Phi\left(n\right)$ and $D\left(n\right)$. But it is not clear (see Table \ref{tablerela1}) which of $\Phi\left(n\right)$ or $D\left(n\right)$ is closer to $2^n$. One way to answer this, at least on average, is to calculate the partial sums $\sum_{n\leq N}\Phi\left(n\right)$ and $\sum_{n\leq N}D\left(n\right)$ and compare them with the expected value $\sum_{n\leq N}2^n = 2^{N+1}-2$. To accomplish this task, we will use the following results.
\begin{lemma}\label{NaAK}
(Nathanson, \cite{Na}) The following holds:
\begin{itemize}
\item [(i)] $\displaystyle f\left(n\right) = \sum_{d\leq n}\mu\left(d\right)\left(2^{\left\lfloor \frac{n}{d}\right\rfloor}-1\right)$ for every $n\geq 1$
\item [(ii)] $\displaystyle \Phi\left(n\right) = \sum_{d\mid n}\mu\left(d\right)\left(2^{ \frac{n}{d}}-1\right)$ for every $n\geq 1$
\end{itemize}
\end{lemma}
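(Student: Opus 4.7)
The plan is to derive both formulas by Möbius inversion, starting from a counting identity obtained by partitioning nonempty subsets of $\{1,2,\ldots,n\}$ according to the value of their gcd.

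For part (ii), I would first observe that for any nonempty $A\subseteq\{1,2,\ldots,n\}$, the value $d=\gcd(A\cup\{n\})$ is a divisor of $n$. The map $A\mapsto A/d$ gives a bijection between subsets $A$ with $\gcd(A\cup\{n\})=d$ and nonempty subsets $B\subseteq\{1,2,\ldots,n/d\}$ with $\gcd(B\cup\{n/d\})=1$, of which there are exactly $\Phi(n/d)$. Summing over the divisors of $n$ yields $2^{n}-1=\sum_{d\mid n}\Phi(n/d)$, and standard Möbius inversion over the divisor lattice gives $\Phi(n)=\sum_{d\mid n}\mu(d)\bigl(2^{n/d}-1\bigr)$.

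For part (i), the same bijection idea applies but $d=\gcd(A)$ now ranges over all of $\{1,2,\ldots,n\}$, with the set of multiples of $d$ in $\{1,\ldots,n\}$ having size $\lfloor n/d\rfloor$. The bijection $A\mapsto A/d$ shows that the number of nonempty $A$ with $\gcd(A)=d$ equals $f(\lfloor n/d\rfloor)$, so
\begin{equation*}
2^{n}-1=\sum_{d=1}^{n}f\bigl(\lfloor n/d\rfloor\bigr).
\end{equation*}
The formula to invert is therefore not a pure divisor sum but a floor-function sum, so ordinary Möbius inversion is not immediately available.

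The main step, and the one I expect to be the only nontrivial obstacle, is the floor-version of Möbius inversion: if $F(m)=\sum_{e=1}^{m}G(\lfloor m/e\rfloor)$ for all $m\geq 1$, then $G(n)=\sum_{d=1}^{n}\mu(d)F(\lfloor n/d\rfloor)$. I would prove this by substituting the definition of $F$, using the telescoping identity $\lfloor\lfloor n/d\rfloor/e\rfloor=\lfloor n/(de)\rfloor$, grouping terms by $k=de$, and invoking $\sum_{d\mid k}\mu(d)=[k=1]$ to collapse the double sum to $G(n)$. Applying this with $F(m)=2^{m}-1$ and $G(m)=f(m)$ then yields part (i). With these two ingredients in hand the lemma follows immediately; alternatively, since the result is due to Nathanson \cite{Na}, one could simply cite it and omit the proof.
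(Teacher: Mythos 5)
Your proposal is correct, and it is more than the paper itself provides: the paper gives no proof of this lemma at all, simply attributing both formulas to Nathanson \cite{Na} and importing them by reference, which is exactly the fallback you mention in your final sentence. Your self-contained derivation is sound on every point. The bijection $A\mapsto\frac{1}{d}A$ correctly identifies the subsets with $\gcd\left(A\cup\{n\}\right)=d$ (respectively $\gcd\left(A\right)=d$) with the nonempty subsets of $\{1,\ldots,n/d\}$ relatively prime to $n/d$ (respectively the relatively prime subsets of $\{1,\ldots,\lfloor n/d\rfloor\}$), yielding the counting identities $2^{n}-1=\sum_{d\mid n}\Phi\left(n/d\right)$ and $2^{n}-1=\sum_{d\leq n}f\left(\lfloor n/d\rfloor\right)$. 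The first inverts by ordinary M\"obius inversion over divisors; the second needs the floor-function variant, and your sketch of that inversion is correct --- the two ingredients $\lfloor\lfloor n/d\rfloor/e\rfloor=\lfloor n/(de)\rfloor$ and $\sum_{d\mid k}\mu\left(d\right)=[k=1]$ are exactly what collapses the double sum. This is essentially Nathanson's original argument, so you have not so much diverged from the source as supplied the details the paper chose to omit; note also that the same bijection $A\mapsto\frac{1}{d}A$ is the engine of the paper's own Theorem \ref{combithm6}, so your write-up makes that later proof read as a natural generalization rather than a new idea.
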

\begin{lemma}\label{AKnew}
(Ayad and Kihel \cite{AK}) The following holds:
\begin{itemize}
\item [(i)] $\Phi(n+1) = 2(f(n+1)-f\left(n\right))$ for every $n\geq 1$
\item [(ii)] $\Phi\left(n\right) \equiv 0\pmod 3$ for every $n\geq 3$
\end{itemize}
\end{lemma}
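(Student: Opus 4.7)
The plan is to prove (i) by a case-splitting bijection and (ii) by reducing the M\"obius formula in Lemma \ref{NaAK}(ii) modulo $3$.

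For part (i), I would classify the nonempty subsets $A\subseteq\{1,2,\ldots,n+1\}$ with $\gcd(A\cup\{n+1\})=1$ according to whether $n+1\in A$. In the first case, $A\cup\{n+1\}=A$, so the condition becomes $\gcd(A)=1$; these are exactly the relatively prime subsets of $\{1,\ldots,n+1\}$ that contain $n+1$. The complementary class among all relatively prime subsets of $\{1,\ldots,n+1\}$ consists of the relatively prime subsets of $\{1,\ldots,n\}$, giving a count of $f(n+1)-f(n)$ for this case. In the second case ($n+1\notin A$), the map $A\mapsto A\cup\{n+1\}$ is a bijection onto the first class, the only possible defect being the singleton $\{n+1\}$; but for $n\ge 1$ one has $\gcd(\{n+1\})=n+1\ne 1$, so the singleton is not a relatively prime set and the bijection is exact. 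Summing the two equal contributions yields $\Phi(n+1)=2(f(n+1)-f(n))$.

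For part (ii), the plan is to start from $\Phi(n)=\sum_{d\mid n}\mu(d)(2^{n/d}-1)$ and use $2\equiv -1\pmod 3$ to obtain $2^{n/d}-1\equiv (-1)^{n/d}-1\pmod 3$, which vanishes when $n/d$ is even and equals $1$ when $n/d$ is odd. Hence $\Phi(n)\equiv \sum_{d\mid n,\;n/d\text{ odd}}\mu(d)\pmod 3$. Writing $n=2^a m$ with $m$ odd, the condition that $n/d$ be odd becomes $2^a\mid d$, and I would finish by splitting on $a$: when $a=0$ (so $n\ge 3$ is odd) the sum is $\sum_{d\mid n}\mu(d)=0$; when $a\ge 2$, every admissible $d$ fails to be squarefree and contributes $0$; and when $a=1$, writing $d=2d'$ reduces the sum to $-\sum_{d'\mid m}\mu(d')$, which is $0$ provided $m>1$. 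The leftover case $a=1$, $m=1$ corresponds to $n=2$, which is precisely excluded by the hypothesis $n\ge 3$.

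This $n=2$ boundary is the only subtle point in the whole argument: the reduction mod $3$ really does give $\Phi(2)\equiv -1\not\equiv 0$, so the excluded case is not a defect of the proof but the true obstruction, and it is precisely what the hypothesis $n\ge 3$ is designed to avoid. Apart from this check and the analogous verification in (i) that the empty set cannot spoil the bijection (which it does not, again by $n+1\ne 1$), no substantial obstacle is anticipated; the main work is careful bookkeeping of divisors.
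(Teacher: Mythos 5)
Your proposal is correct, but note that the paper itself offers no proof of this lemma: it is quoted verbatim from Ayad and Kihel \cite{AK}, so there is no internal argument to compare against. Your part (i) is a clean self-contained combinatorial proof --- the split on whether $n+1\in A$, the identification of the first class with the relatively prime subsets of $\{1,\ldots,n+1\}$ containing $n+1$ (count $f(n+1)-f(n)$), and the bijection $A\mapsto A\cup\{n+1\}$ whose only potential defect, the singleton $\{n+1\}$, is correctly ruled out since $\gcd(\{n+1\})=n+1>1$. This differs in flavor from the published route, which derives $\Phi(n)=2(f(n)-f(n-1))$ from the M\"obius formulas by computing $f(n)-f(n-1)=\sum_{d\mid n}\mu(d)2^{n/d-1}$ via the observation that $\lfloor n/d\rfloor-\lfloor (n-1)/d\rfloor$ is $1$ or $0$ according as $d\mid n$ or not; your bijection is arguably more illuminating and avoids the generating formulas entirely. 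Your part (ii) is also correct: the reduction $2^{n/d}-1\equiv(-1)^{n/d}-1\pmod 3$, the resulting sum $\sum_{d\mid n,\ n/d\ \mathrm{odd}}\mu(d)$, and the three-way split on the exact power of $2$ dividing $n$ all check out, and you correctly identify $n=2$ (where the sum is $\mu(2)=-1$) as the genuine obstruction excluded by the hypothesis $n\geq 3$. The only cosmetic remark is that in (ii) you rely on Lemma \ref{NaAK}(ii), itself an unproved citation, so your argument is conditional on that formula; this is entirely appropriate given how the paper is structured.
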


\textbf{Notes}
\begin{itemize}
\item [1)] The functions $f\left(n\right)$ and $\Phi\left(n\right)$ are introduced by Nathason \cite{Na} and generalized by many authors \cite{AK1,AK2,Po,Sh1,Sh2,To}. We refer the reader to Pongsriiam's article (\cite{Po} or \cite{Po1}) for a unified approach and the shortest calculation of the formulas for $f\left(n\right)$, $\Phi\left(n\right)$ and their generalizations. Other related results can be found, for example, in the article of El Bachraoui \cite{Ba5}, El Bachraoui and Salim \cite{BS}, and Tang \cite{Ta}.
\item [2)] The sequences $f\left(n\right)$ and $\Phi\left(n\right)$ are, respectively, Sloane's sequence A085945 and A038199. Note also that A038199 and A027375 coincide for all $n\geq 2$ (see the comments at the end of this article).
\end{itemize}

\section{Partial Sums and Limits}
In this section, we compute the partial sums of $f\left(n\right)$, $\Phi\left(n\right)$, and $D\left(n\right)$. Then we show how to obtain the limits shown in (\ref{relintrolimsupeq1}) to (\ref{relintrolimsupeq6}). Throughout, for a real value function $f$ and a positive function $g$, $f = O\left(g\right)$ or $f\ll g$ means that there exists a positive constant $c$ such that $\left|f\left(x\right)\right|\leq cg\left(x\right)$ for all large numbers $x$.

\begin{theorem}\label{relpartialthm2}
The following holds uniformly for $N\geq 1$.
\begin{itemize}
\item[(i)] $ \displaystyle \sum_{n\leq N}f\left(n\right) = \sum_{d\leq N}d\mu\left(d\right)2^{\left\lfloor \frac{N}{d}\right\rfloor}+\sum_{d\leq N}\mu\left(d\right)2^{\left\lfloor \frac{N}{d}\right\rfloor}$$\left(N-d\left\lfloor \frac{N}{d}\right\rfloor+1\right)+O\left(N^2\right)$\\
 $=$ $2^{N+1}-2^{\left\lfloor \frac{N}{2}\right\rfloor}\left(N-2\left\lfloor \frac{N}{2}\right\rfloor+3\right)-2^{\left\lfloor \frac{N}{3}\right\rfloor}\left(N-3\left\lfloor \frac{N}{3}\right\rfloor+4\right)+O\left(2^{\frac{N}{5}}\right)$.
\item[(ii)] $\displaystyle \sum_{n\leq N}\Phi\left(n\right)= 2f(N)-1 = 2^{N+1}-2\cdot2^{\left\lfloor \frac{N}{2}\right\rfloor}-2\cdot2^{\left\lfloor \frac{N}{3}\right\rfloor}+O\left(2^{\frac{N}{5}}\right)$.
\item[(iii)] $\displaystyle \sum_{n\leq N}D\left(n\right) = 2^{N+1}-2^{\left\lfloor \frac{N}{2}\right\rfloor}$$\left(N-2\left\lfloor \frac{N}{2}\right\rfloor+1\right)+O\left(N2^{\frac{N}{3}}\right)$.
\end{itemize}
\end{theorem}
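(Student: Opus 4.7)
For (i), the plan is to begin from Nathanson's formula in Lemma~\ref{NaAK}(i) and interchange the order of summation, writing
\begin{align*}
\sum_{n\le N} f(n) \;=\; \sum_{d\le N}\mu(d)\sum_{d\le n\le N}\bigl(2^{\lfloor n/d\rfloor}-1\bigr).
\end{align*}
For fixed $d$, I would group the $n\in[d,N]$ by the common value $q=\lfloor n/d\rfloor$: each $q\in\{1,\ldots,\lfloor N/d\rfloor-1\}$ is attained by exactly $d$ values of $n$, while $q=\lfloor N/d\rfloor$ is attained by $N-d\lfloor N/d\rfloor+1$ values. Summing the geometric progression $\sum_{q=1}^{\lfloor N/d\rfloor-1}2^q = 2^{\lfloor N/d\rfloor}-2$ and collecting gives the first identity in (i), with the $O(N^2)$ absorbing the constant-in-$n$ residues $\sum_{d\le N}\mu(d)\bigl[-2d-(N-d+1)\bigr]$. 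To pass to the second identity, I isolate $d=1,2,3$ (the $d=4$ term vanishes since $\mu(4)=0$) and bound the tail via
\begin{align*}
\sum_{d=5}^{N} d\cdot 2^{\lfloor N/d\rfloor} \;\le\; 5\cdot 2^{N/5} + 2^{N/6}\sum_{d=6}^{N}d \;\ll\; 2^{N/5},
\end{align*}
the polynomial factor being swallowed by the exponential $2^{N/30}$ gap between the $d=5$ and $d=6$ terms; the second sum in (i) is handled identically because the extra factor $N-d\lfloor N/d\rfloor+1$ is bounded by $d+1$. The $O(N^2)$ residue is itself $O(2^{N/5})$, so everything consolidates into the stated error.

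For (ii), summing the identity in Lemma~\ref{AKnew}(i) telescopes immediately:
\begin{align*}
\sum_{n=2}^{N}\Phi(n)\;=\;2\sum_{n=1}^{N-1}\bigl(f(n+1)-f(n)\bigr)\;=\;2f(N)-2,
\end{align*}
and adding $\Phi(1)=1$ yields $\sum_{n\le N}\Phi(n)=2f(N)-1$. The asymptotic then follows by plugging Lemma~\ref{NaAK}(i) into $f(N)$, extracting $d=1,2,3$, and noting $\sum_{d\ge 5}2^{\lfloor N/d\rfloor}\ll 2^{N/5}$ by the same tail argument, while the Mertens-type sum $\sum_{d\le N}\mu(d)$ is absorbed into the error.

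For (iii), the plan is to swap the order of summation in $D(n)=\sum_{d\mid n}f(d)$ to obtain
\begin{align*}
\sum_{n\le N}D(n)\;=\;\sum_{d\le N}f(d)\left\lfloor\frac{N}{d}\right\rfloor\;=\;\sum_{k\le N}F\!\left(\left\lfloor\frac{N}{k}\right\rfloor\right),\qquad F(M):=\sum_{d\le M}f(d),
\end{align*}
and then substitute the explicit form of $F(M)$ from (i) into the $k=1,2,3$ summands, using the crude bound $F(M)\ll 2^{M+1}$ for $k\ge 4$. The $2^{N+1}$ main term comes from $F(N)$; the coefficient of $2^{\lfloor N/2\rfloor}$ is $-(N-2\lfloor N/2\rfloor+3)$ from $F(N)$ combined with the leading $+2$ from $F(\lfloor N/2\rfloor)\sim 2^{\lfloor N/2\rfloor+1}$, summing to $-(N-2\lfloor N/2\rfloor+1)$ as claimed; and the $2^{\lfloor N/3\rfloor}$ contributions collapse to an $O(N)$ coefficient, which together with all $k\ge 4$ terms ($\ll 2^{N/4}$ apiece, with at most $N$ summands) is absorbed into $O(N\,2^{N/3})$.

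The only real obstacle is bookkeeping: one must verify, uniformly in $N$, that every leftover piece -- the constant-in-$n$ residues in (i), the Mertens-type sums $\sum\mu(d)$, the subleading terms of $F(\lfloor N/2\rfloor)$ and $F(\lfloor N/3\rfloor)$ in (iii), and the $d\ge 5$ or $k\ge 4$ tails -- really is dominated by the error term advertised in each part.
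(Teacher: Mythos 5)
Your proposal is correct and follows essentially the same route as the paper: interchanging the order of summation in Nathanson's formula and grouping $n$ by the value of $\left\lfloor n/d\right\rfloor$ for (i), telescoping the Ayad--Kihel identity to get $2f(N)-1$ for (ii), and the Dirichlet-swap $\sum_{n\leq N}D(n)=\sum_{k\leq N}F\left(\left\lfloor N/k\right\rfloor\right)$ with the $k=2$ term correcting the coefficient of $2^{\left\lfloor N/2\right\rfloor}$ for (iii), with the same $d\geq 5$ and $k\geq 3$ tail bounds. No substantive differences from the paper's argument.
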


\begin{proof}
Let $N$ be a large positive integer. Then 
\begin{align*}
\sum_{n\leq N}f\left(n\right) &= \sum_{n\leq N}\sum_{d\leq n}\mu\left(d\right)\left(2^{\left\lfloor \frac{n}{d}\right\rfloor}-1\right)\\
&=\sum_{n\leq N}\sum_{d\leq n}\mu\left(d\right)2^{\left\lfloor \frac{n}{d}\right\rfloor}+O\left(N^2\right).
\end{align*}
Changing the order of summation, we obtain
\begin{equation}\label{parsumandlimstar}
\sum_{n\leq N}f\left(n\right) =\sum_{d\leq N}\mu\left(d\right)\sum_{d\leq n\leq N}2^{\left\lfloor \frac{n}{d}\right\rfloor}+O(N^2)
\end{equation} 
Consider the innersum above. We divide the interval of summation $[d,N]$ into $\bigcup_{k=1}^{\left\lfloor \frac{N}{d}\right\rfloor-1}[kd, (k+1)d)\cup\left[\left\lfloor\frac{N}{d} \right\rfloor d,N\right]$. If $n\in [kd, (k+1)d)$, then $\left\lfloor\frac{n}{d} \right\rfloor = k$. So (\ref{parsumandlimstar}) becomes
\begin{align}\label{relthm2new}
\sum_{d\leq N}\mu\left(d\right)&\left(\sum_{k=1}^{\left\lfloor \frac{N}{d}\right\rfloor-1}\sum_{kd\leq n<(k+1)d}2^{\left\lfloor \frac{n}{d}\right\rfloor}+\sum_{\left\lfloor \frac{N}{d}\right\rfloor d\leq n\leq N}2^{\left\lfloor \frac{n}{d}\right\rfloor}\right)+O(N^2)\notag\\
&= \sum_{d\leq N}\mu\left(d\right)\left(d\sum_{k=1}^{\left\lfloor \frac{N}{d}\right\rfloor-1}2^k+2^{\left\lfloor \frac{N}{d}\right\rfloor}\left(N-d\left\lfloor \frac{N}{d}\right\rfloor +1\right)\right)+O(N^2)\notag\\
&= \sum_{d\leq N}d\mu\left(d\right)2^{\left\lfloor \frac{N}{d}\right\rfloor}+\sum_{d\leq N}\mu\left(d\right)2^{\left\lfloor \frac{N}{d}\right\rfloor}\left(N-d\left\lfloor \frac{N}{d}\right\rfloor +1\right)+O\left(N^2\right)
\end{align}
We see from (\ref{relthm2new}) that the main terms can be obtained from the small value of $d$. Expanding the sum for $d=1, 2, 3, 4$, we obtain
\begin{align}\label{relthm2new2}
2^{N+1}-2^{\left\lfloor \frac{N}{2}\right\rfloor}\left(N-2\left\lfloor \frac{N}{2}\right\rfloor+3\right)-2^{\left\lfloor \frac{N}{3}\right\rfloor}\left(N-3\left\lfloor \frac{N}{3}\right\rfloor+4\right)+O\left(\sum_{5\leq d\leq N}d2^{\left\lfloor \frac{N}{d}\right\rfloor}\right)
\end{align}
We have 
\begin{equation}\label{relthm2new3}
\sum_{5\leq d\leq N}d2^{\left\lfloor \frac{N}{d}\right\rfloor} \ll 2^{\left\lfloor \frac{N}{5}\right\rfloor}+\sum_{6\leq d\leq N}N2^{\left\lfloor \frac{N}{6}\right\rfloor}\ll 2^{\frac{N}{5}}
\end{equation}
We obtain (i) from (\ref{relthm2new}), (\ref{relthm2new2}), and (\ref{relthm2new3}).

Applying Lemma \ref{AKnew}(i), and \ref{NaAK}(i), we obtain
\begin{align*}
\sum_{n\leq N}\Phi\left(n\right) &= 1+\sum_{n\leq N-1}\Phi\left(n+1\right)\\
&=1+2\sum_{n\leq N-1}\left(f\left(n+1\right)-f\left(n\right)\right)\\
&=2f(N)-1\\
&= 2\left(\sum_{d\leq N}\mu\left(d\right)\left(2^{\left\lfloor \frac{N}{d}\right\rfloor}-1\right)\right)-1
\end{align*}
Similar to the proof of (i), we expand the sum for $d=1, 2, 3, 4$ to obtain (ii). Next we write, 
$$
\sum_{n\leq N}D\left(n\right) =\sum_{n\leq N}\sum_{d\mid n}f\left(d\right) = \sum_{dk\leq N}f\left(d\right) = \sum_{k\leq N}\sum_{d\leq\frac{N}{k}}f\left(d\right).
$$
Recall that $\left\lfloor \frac{\left\lfloor x\right\rfloor}{n}\right\rfloor = \left\lfloor \frac{x}{n}\right\rfloor$ for every $x\in \mathbb R$. Applying (i) to the above sum, we get
\begin{align}\label{relset2thm5eq1}
\sum_{n\leq N}D\left(n\right) = \sum_{k\leq N}2^{\left\lfloor \frac{N}{k}\right\rfloor+1}-2^{\left\lfloor \frac{N}{2k}\right\rfloor}\left(\left\lfloor \frac{N}{k}\right\rfloor-2\left\lfloor \frac{N}{2k}\right\rfloor+3\right)+O\left(N2^{\frac{N}{3}}\right) 
\end{align}
Now $\displaystyle \sum_{3\leq k\leq N}2^{\left\lfloor \frac{N}{k}\right\rfloor+1}-2^{\left\lfloor \frac{N}{2k}\right\rfloor}$$\left(\left\lfloor \frac{N}{k}\right\rfloor-2\left\lfloor \frac{N}{2k}\right\rfloor+3\right)\ll$$\displaystyle \sum_{k\leq N}2^{\frac{N}{3}}\ll $$N2^{\frac{N}{3}}$. 

So (\ref{relset2thm5eq1}) becomes
\begin{align*}
\sum_{n\leq N}D\left(n\right) &= 2^{N+1}-2^{\left\lfloor \frac{N}{2}\right\rfloor}\left(N-2\left\lfloor \frac{N}{2}\right\rfloor+3\right)+2^{\left\lfloor \frac{N}{2}\right\rfloor+1}+O\left(N2^{\frac{N}{3}}\right)\\
&= 2^{N+1}-2^{\left\lfloor \frac{N}{2}\right\rfloor}\left(N-2\left\lfloor \frac{N}{2}\right\rfloor+1\right)+O\left(N2^{\frac{N}{3}}\right).
\end{align*}
This completes the proof.
\end{proof}

\begin{corollary}\label{partialsumcor3}
We obtain the following:
\begin{itemize}
\item[(i)] $\displaystyle \lim_{\substack{N\rightarrow\infty\\N\;\text{odd}}} \frac{\left|\sum_{n\leq N}f\left(n\right)-2^{N+1}\right|}{2^{\left\lfloor \frac{N}{2}\right\rfloor}} = 4$,
\item[(ii)] $\displaystyle \lim_{\substack{N\rightarrow\infty\\N\;\text{even}}} \frac{\left|\sum_{n\leq N}f\left(n\right)-2^{N+1}\right|}{2^{\left\lfloor \frac{N}{2}\right\rfloor}} = 3$,
\item[(iii)] $\displaystyle \lim_{N\rightarrow \infty} \frac{\left|\sum_{n\leq N}\Phi\left(n\right)-2^{N+1}\right|}{2^{\left\lfloor \frac{N}{2}\right\rfloor}} = 2$,
\item[(iv)]  $\displaystyle \lim_{\substack{N\rightarrow \infty\\N\;\text{odd}}} \frac{\left|\sum_{n\leq N}D\left(n\right)-2^{N+1}\right|}{2^{\left\lfloor \frac{N}{2}\right\rfloor}} = 2$,\quad\text{and}  
\item[(v)] $\displaystyle \lim_{\substack{N\rightarrow \infty\\N\;\text{even}}} \frac{\left|\sum_{n\leq N}D\left(n\right)-2^{N+1}\right|}{2^{\left\lfloor \frac{N}{2}\right\rfloor}} = 1$.
\end{itemize}
\end{corollary}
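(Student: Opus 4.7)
The plan is to substitute each asymptotic from Theorem \ref{relpartialthm2} directly into the quantity whose limit we want, divide through by $2^{\lfloor N/2\rfloor}$, and argue that every term except one explicit bounded integer vanishes as $N\to\infty$. The key estimate used throughout is that $\lfloor N/2\rfloor - \lfloor N/3\rfloor \geq N/6 - 1$ and $\lfloor N/2\rfloor - N/5 \geq 3N/10 - 1$, so both $(N+c)\cdot 2^{\lfloor N/3\rfloor - \lfloor N/2\rfloor}$ and $N\cdot 2^{N/3 - \lfloor N/2\rfloor}$ tend to $0$ exponentially as $N\to\infty$.

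For (i) and (ii), I would start from Theorem \ref{relpartialthm2}(i):
\[ \sum_{n\leq N}f(n)-2^{N+1} = -2^{\lfloor N/2\rfloor}\bigl(N-2\lfloor N/2\rfloor+3\bigr) - 2^{\lfloor N/3\rfloor}\bigl(N-3\lfloor N/3\rfloor+4\bigr) + O\bigl(2^{N/5}\bigr). \]
After dividing by $2^{\lfloor N/2\rfloor}$, the middle term is bounded by $(N+4)\cdot 2^{\lfloor N/3\rfloor-\lfloor N/2\rfloor}\to 0$ and the error term becomes $O\bigl(2^{N/5-\lfloor N/2\rfloor}\bigr)\to 0$, so the expression approaches $-(N-2\lfloor N/2\rfloor+3)$. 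Its absolute value equals $3$ when $N$ is even and $4$ when $N$ is odd, which, after passing to the limit along each parity subsequence, yields (ii) and (i).

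Part (iii) is the cleanest: from Theorem \ref{relpartialthm2}(ii),
\[ \sum_{n\leq N}\Phi(n)-2^{N+1} = -2\cdot 2^{\lfloor N/2\rfloor} - 2\cdot 2^{\lfloor N/3\rfloor} + O\bigl(2^{N/5}\bigr), \]
so dividing by $2^{\lfloor N/2\rfloor}$ leaves the constant $-2$ plus two summands that vanish by the estimate above; no parity split is needed. For (iv) and (v) I would use Theorem \ref{relpartialthm2}(iii):
\[ \sum_{n\leq N}D(n)-2^{N+1} = -2^{\lfloor N/2\rfloor}\bigl(N-2\lfloor N/2\rfloor+1\bigr) + O\bigl(N\cdot 2^{N/3}\bigr). \]
Upon dividing by $2^{\lfloor N/2\rfloor}$ the error term is $O\bigl(N\cdot 2^{-N/6+1}\bigr)\to 0$ and the surviving coefficient is $-(N-2\lfloor N/2\rfloor+1)$, whose absolute value is $1$ for even $N$ and $2$ for odd $N$, giving (v) and (iv).

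The main obstacle is purely bookkeeping: one has to confirm in each case that the subdominant terms involving $2^{\lfloor N/3\rfloor}$, $2^{N/5}$, or $N\cdot 2^{N/3}$ remain negligible after the polynomial prefactor and after dividing by $2^{\lfloor N/2\rfloor}$. Once this is established, each limit reduces to reading off the value of $N-2\lfloor N/2\rfloor\in\{0,1\}$, which is exactly why parts (i), (ii), (iv), (v) require a split into even and odd $N$, while part (iii), whose dominant coefficient is the parity-free constant $-2$, does not.
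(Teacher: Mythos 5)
Your proposal is correct and follows essentially the same route as the paper: the paper likewise substitutes the asymptotics of Theorem \ref{relpartialthm2}, divides by $2^{\left\lfloor \frac{N}{2}\right\rfloor}$, observes that the remaining error terms such as $O\left(2^{\frac{N}{3}-\left\lfloor \frac{N}{2}\right\rfloor}\right)$ tend to $0$, and reads off $N-2\left\lfloor \frac{N}{2}\right\rfloor+3$ (resp. $+1$, or the constant $2$) according to the parity of $N$. Your version merely makes the bookkeeping of the subdominant terms more explicit than the paper, which dispatches parts (iii)--(v) with ``similarly.''
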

\begin{proof}
By Theorem \ref{relpartialthm2}(i), we see that 
$$
\frac{\left|\sum_{n\leq N}f\left(n\right)-2^{N+1}\right|}{2^{\left\lfloor \frac{N}{2}\right\rfloor}} = N-2\left\lfloor \frac{N}{2}\right\rfloor+3+O\left(2^{\frac{N}{3}-\left\lfloor \frac{N}{2}\right\rfloor}\right).
$$
Note that $N-2\left\lfloor \frac{N}{2}\right\rfloor+3 = \begin{cases}
3\quad&\text{if $N$ is even};\\
4\quad&\text{if $N$ is odd},
\end{cases}$
and $2^{\frac{N}{3}-\left\lfloor \frac{N}{2}\right\rfloor}\rightarrow 0$ as $N\rightarrow \infty$. So we obtain (i) and (ii). Similarly, we can apply Theorem \ref{relpartialthm2}(ii) and \ref{relpartialthm2}(iii) to obtain (iii), (iv) and (v).
\end{proof}
\begin{corollary}\label{1to6}
The limits given in (\ref{relintrolimsupeq1}) to (\ref{relintrolimsupeq6}) hold. 
\end{corollary}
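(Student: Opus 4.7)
The plan is to reduce Corollary \ref{1to6} to Corollary \ref{partialsumcor3} by converting the normalizer $2^{\lfloor N/2\rfloor}$ used there into the normalizer $2^{N/2}$ appearing in (\ref{relintrolimsupeq1})–(\ref{relintrolimsupeq6}). The two differ by a simple parity-dependent factor: $2^{N/2}/2^{\lfloor N/2\rfloor}$ equals $1$ when $N$ is even and $\sqrt{2}$ when $N$ is odd. Since each of the six quantities whose $\limsup$ or $\liminf$ we wish to evaluate is nonnegative and its behavior along the even-$N$ and odd-$N$ subsequences already converges, the $\limsup$ and $\liminf$ are simply the maximum and minimum of these two subsequential limits.

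Concretely, for each $g\in\{f,\Phi,D\}$ I would write
\[
\frac{\bigl|\sum_{n\le N}g(n)-2^{N+1}\bigr|}{2^{N/2}} \;=\; c_N\cdot\frac{\bigl|\sum_{n\le N}g(n)-2^{N+1}\bigr|}{2^{\lfloor N/2\rfloor}},
\]
where $c_N=1$ for $N$ even and $c_N=1/\sqrt{2}$ for $N$ odd. Denote by $L_e(g)$ and $L_o(g)$ the limits along even and odd $N$ of the second factor, supplied by Corollary \ref{partialsumcor3}; explicitly $L_e(f)=3$, $L_o(f)=4$, $L_e(\Phi)=L_o(\Phi)=2$, $L_e(D)=1$, and $L_o(D)=2$. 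The displayed identity then shows that the full quotient converges to $L_e(g)$ along even $N$ and to $L_o(g)/\sqrt{2}$ along odd $N$.

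Finally, I would take the maximum and minimum of these two subsequential limits in each of the three cases. For $g=f$ this gives $\max(3,2\sqrt{2})=3$ and $\min(3,2\sqrt{2})=2\sqrt{2}$, which are (\ref{relintrolimsupeq1}) and (\ref{relintrolimsupeq2}). For $g=\Phi$ we get $\max(2,\sqrt{2})=2$ and $\min(2,\sqrt{2})=\sqrt{2}$, yielding (\ref{relintrolimsupeq3}) and (\ref{relintrolimsupeq4}). For $g=D$ we get $\max(1,\sqrt{2})=\sqrt{2}$ and $\min(1,\sqrt{2})=1$, establishing (\ref{relintrolimsupeq5}) and (\ref{relintrolimsupeq6}).

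There is no real obstacle here, since all the analytic work has already been done in Theorem \ref{relpartialthm2} and Corollary \ref{partialsumcor3}; the proof is essentially a bookkeeping exercise. The only points where care is needed are keeping track of the parity factor $c_N$ and verifying the two strict inequalities $3>2\sqrt{2}$ and $\sqrt{2}>1$ that determine which subsequential limit wins the maximum and which wins the minimum in the $f$ and $D$ cases.
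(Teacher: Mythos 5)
Your proposal is correct and follows essentially the same route as the paper: both reduce the six limits to Corollary \ref{partialsumcor3} by observing that $2^{N/2}=2^{\lfloor N/2\rfloor}$ for even $N$ and $2^{N/2}=\sqrt{2}\cdot 2^{\lfloor N/2\rfloor}$ for odd $N$, then take the maximum and minimum of the two subsequential limits. The bookkeeping ($3$ vs.\ $2\sqrt{2}$, $2$ vs.\ $\sqrt{2}$, $\sqrt{2}$ vs.\ $1$) matches the paper exactly.
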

\begin{proof}
By Corollary \ref{partialsumcor3}(ii), we see that $\displaystyle \lim_{\substack{N\rightarrow\infty\\N\;\text{even}}} \frac{\left|\sum_{n\leq N}f\left(n\right)-2^{N+1}\right|}{2^{\frac{N}{2}}} = 3$, and by Corollary \ref{partialsumcor3}(i), we have 
\begin{align*}
\lim_{\substack{N\rightarrow\infty\\N\;\text{odd}}} \frac{\left|\sum_{n\leq N}f\left(n\right)-2^{N+1}\right|}{2^{\frac{N}{2}}} &= \lim_{\substack{N\rightarrow\infty\\N\;\text{odd}}} \frac{\left|\sum_{n\leq N}f\left(n\right)-2^{N+1}\right|}{2^{\frac{N-1}{2}}\sqrt 2} \\
&= \frac{1}{\sqrt 2}\lim_{\substack{N\rightarrow\infty\\N\;\text{odd}}} \frac{\left|\sum_{n\leq N}f\left(n\right)-2^{N+1}\right|}{2^{\left\lfloor \frac{N}{2}\right\rfloor}} \\
& = \frac{4}{\sqrt 2} = 2\sqrt 2.
\end{align*}
This gives (\ref{relintrolimsupeq1}) and (\ref{relintrolimsupeq2}). The proof of (\ref{relintrolimsupeq3}), (\ref{relintrolimsupeq4}), (\ref{relintrolimsupeq5}), and (\ref{relintrolimsupeq6}) is similar.
\end{proof}
We know from (\ref{sim}) that $f\left(n\right)$, $\Phi\left(n\right)$ and $D\left(n\right)$ are asymptotic to $2^n$. So we expect that $\sum_{n\leq N} \frac{f\left(n\right)}{2^n}$, $\sum_{n\leq N} \frac{\Phi\left(n\right)}{2^n}$, and $\sum_{n\leq N} \frac{D\left(n\right)}{2^n}$ are asymptotic to $N$. But this does not give much the information on the error terms $\left|\sum_{n\leq N} \frac{f\left(n\right)}{2^n}-N\right|$, $\left|\sum_{n\leq N} \frac{\Phi\left(n\right)}{2^n}-N\right|$, and $\left|\sum_{n\leq N} \frac{D\left(n\right)}{2^n}-N\right|$. We show in the next corollary that the error terms are small. 
\begin{corollary}
\begin{itemize}
\item [] 
\item [(i)] $\displaystyle \sum_{n\leq N}\frac{f\left(n\right)}{2^n} = N+1+\left(\log 2\right)\int_1^\infty\frac{\sum_{n\leq t}f\left(n\right)-2^{\left\lfloor t\right\rfloor+1}}{2^t}dt+O\left(2^{-\frac{N}{2}}\right)$.
\item [(ii)]$\displaystyle \sum_{n\leq N}\frac{\Phi\left(n\right)}{2^n} = N+1+\left(\log 2\right)\int_1^\infty\frac{\sum_{n\leq t}\Phi\left(n\right)-2^{\left\lfloor t\right\rfloor+1}}{2^t}dt+O\left(2^{-\frac{N}{2}}\right)$.
\item [(iii)] $\displaystyle \sum_{n\leq N}\frac{D\left(n\right)}{2^n} = N+1+\left(\log 2\right)\int_1^\infty\frac{\sum_{n\leq t}D\left(n\right)-2^{\left\lfloor t\right\rfloor+1}}{2^t}dt +O\left(2^{-\frac{N}{2}}\right)$.
\end{itemize}
\end{corollary}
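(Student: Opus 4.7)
The plan is partial (Abel) summation. Setting $F(N):=\sum_{n\le N} f(n)$, the identity $2^{-n}-2^{-(n+1)}=2^{-(n+1)}$ gives
$$\sum_{n=1}^{N}\frac{f(n)}{2^n}=\frac{F(N)}{2^N}+\sum_{n=1}^{N-1}\frac{F(n)}{2^{n+1}}.$$
The idea is to subtract off the expected main term $2^{n+1}$ inside each $F(n)$, leaving a geometrically small remainder, and then to recognise that remainder series as precisely the integral appearing in the statement.

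Concretely, I would write $F(n)=2^{n+1}+(F(n)-2^{n+1})$ in every term; collecting the constant contribution $2+\sum_{n=1}^{N-1}1=N+1$ gives
$$\sum_{n\le N}\frac{f(n)}{2^n}=(N+1)+\frac{F(N)-2^{N+1}}{2^N}+\sum_{n=1}^{N-1}\frac{F(n)-2^{n+1}}{2^{n+1}}.$$
A direct calculation then converts the integral in (i) into this same series: since $\lfloor t\rfloor=n$ on $[n,n+1)$ and $\int_n^{n+1}2^{-t}\,dt=2^{-(n+1)}/\log 2$,
$$(\log 2)\int_1^\infty\frac{F(\lfloor t\rfloor)-2^{\lfloor t\rfloor+1}}{2^t}\,dt=\sum_{n=1}^\infty\frac{F(n)-2^{n+1}}{2^{n+1}}.$$

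Comparing the two displays, the difference between $\sum_{n\le N}f(n)/2^n$ and $(N+1)+(\log 2)\int_1^\infty\cdots dt$ reduces to
$$\frac{F(N)-2^{N+1}}{2^N}-\sum_{n=N}^\infty\frac{F(n)-2^{n+1}}{2^{n+1}}.$$
By Theorem \ref{relpartialthm2}(i) one has $|F(n)-2^{n+1}|\ll 2^{n/2}$, because the coefficients $N-2\lfloor N/2\rfloor+3$ and $N-3\lfloor N/3\rfloor+4$ appearing there are bounded by absolute constants independent of $N$. Hence both the boundary term and the geometric tail are $O(2^{-N/2})$, proving (i). Parts (ii) and (iii) follow by the identical three-step procedure, using Theorem \ref{relpartialthm2}(ii) and (iii) respectively; for (iii) one additionally notes that $N\cdot 2^{N/3}=o(2^{N/2})$, so the error term in that formula is still $O(2^{N/2})$. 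The only point that needs a moment's care — and the one place where a naive reading of Theorem \ref{relpartialthm2} would cost a factor of $N$ — is this observation that the main-term coefficients are uniformly bounded; without it one would obtain only $O(N\cdot 2^{-N/2})$ in place of the advertised $O(2^{-N/2})$.
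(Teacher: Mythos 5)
Your proof is correct and follows essentially the same route as the paper: partial summation, splitting $F(n)=2^{n+1}+\bigl(F(n)-2^{n+1}\bigr)$, and invoking the uniform bound $\left|F(n)-2^{n+1}\right|\ll 2^{n/2}$ from Theorem \ref{relpartialthm2} to control both the boundary term and the tail. The only difference is that you carry out the summation by parts discretely and then identify the integral in the statement with the resulting series, whereas the paper works directly with the integral form $\sum_{n\le N}f(n)2^{-n}=F(N)2^{-N}+(\log 2)\int_1^N F(t)2^{-t}\,dt$ and evaluates $\int_1^N 2^{\left\lfloor t\right\rfloor+1}2^{-t}\,dt=(N-1)/\log 2$; the content is identical.
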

\begin{proof}
Let $\displaystyle F\left(t\right)=\sum_{n\leq t}f\left(n\right)$. Then by partial summation (see for example in \cite[p.\ 77]{Ap}  or \cite[ p.\ 488]{MV}), we see that
\begin{equation}\label{relset2eq1}
\sum_{n\leq N}\frac{f\left(n\right)}{2^n} = \frac{F\left(N\right)}{2^N}+(\log 2)\int_1^N\frac{F(t)}{2^t}dt
\end{equation}
By Theorem \ref{relpartialthm2}(i), for $t\geq 1$, we can write $F\left(t\right) = 2^{\left\lfloor t\right\rfloor+1}+g\left(t\right)$ where $g\left(t\right) = O\left(2^{\frac{t}{2}}\right)$. Then (\ref{relset2eq1}) becomes
\begin{equation}\label{relset2eq2}
\sum_{n\leq N}\frac{f\left(n\right)}{2^n} = 2+\left(\log 2\right)\int_1^N\frac{2^{\left\lfloor t\right\rfloor+1}}{2^t}+\frac{g(t)}{2^t}dt+O\left(2^{-\frac{N}{2}}\right)
\end{equation}
Consider 
\begin{align}\label{relset2eq3}
\int_1^N\frac{2^{\left\lfloor t\right\rfloor+1}}{2^t}dt &= \sum_{k=1}^{N-1}\int_k^{k+1}\frac{2^{\left\lfloor t\right\rfloor+1}}{2^t}dt\notag\\
&= \sum_{k=1}^{N-1}\int_k^{k+1}\frac{2^{k+1}}{2^t}dt\notag\\
&= \sum_{k=1}^{N-1}2^{k+1}\left[\frac{-2^{-t}}{\log 2}\right]_k^{k+1} = \frac{N-1}{\log 2}
\end{align}
Since $g(t) = O\left(2^{\frac{t}{2}}\right)$, $\int_1^\infty\frac{g(t)}{2^t}dt$ converges and $\int_N^\infty\frac{g(t)}{2^t}dt \ll \int_N^\infty2^{-\frac{t}{2}}dt \ll 2^{-\frac{N}{2}}$. So
\begin{equation}\label{relset2eq4}
\int_1^N\frac{g(t)}{2^t}dt = \int_1^\infty\frac{g(t)}{2^t}dt+O\left(2^{-\frac{N}{2}}\right)
\end{equation}
From (\ref{relset2eq2}), (\ref{relset2eq3}) and (\ref{relset2eq4}), we obtain 
\begin{align*}
\sum_{n\leq N}\frac{f\left(n\right)}{2^n} &= 2+(\log 2)\left(\frac{N-1}{\log 2}+\int_1^\infty\frac{g(t)}{2^t}dt\right)+O\left(2^{-\frac{N}{2}}\right)\\
&= N+1+(\log 2)\int_1^\infty\frac{g(t)}{2^t}dt+O\left(2^{-\frac{N}{2}}\right).
\end{align*}
The proof of (ii) and (iii) is similar.
\end{proof}
We investigate some combinatorial properties of $D\left(n\right)$ in the next section.
\section{Combinatorial properties}
We will give a combinatorial interpretation of $D\left(n\right)$. But it may be useful later to do it in a more general setting. So we introduce the following definition. Throughout, let $X$, $X_d$, and $\frac{1}{d}X$ denote, respectively, a nonempty finite set of positive integers, $\left\{x\in X\;:\;d\mid x\right\}$ and $\left\{\frac{x}{d}\;:\:x\in X\right\}$. 
\begin{definition}
Let $D\left(X,n\right)$ denote the number of nonempty subsets $A$ of $X$ such that $\gcd\left(A\right)\mid n$, and let $f\left(X\right)$ denote the number of relatively prime subsets of $X$.
\end{definition}
\begin{theorem}\label{combithm6}
Let $X$ be a nonempty finite set of positive integers. Then 
$$
D\left(X,n\right) = \sum_{d\mid n}f\left(\frac{1}{d}X_d\right).
$$
\end{theorem}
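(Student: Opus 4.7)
The plan is a direct counting argument: partition the subsets counted by $D(X,n)$ according to the exact value of their gcd, then biject each class with the relatively prime subsets of the corresponding contracted set $\frac{1}{d}X_d$.

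First, I would observe that a nonempty $A\subseteq X$ contributes to $D(X,n)$ precisely when $\gcd(A)=d$ for some divisor $d$ of $n$, because $\gcd(A)\mid n$ is equivalent to $\gcd(A)$ being one of the divisors of $n$. This gives the clean decomposition
$$D(X,n) = \sum_{d\mid n}\#\{A\subseteq X : A\neq\emptyset,\ \gcd(A)=d\}.$$

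Next, for each divisor $d$ of $n$, I would identify the $d$-th class above with the relatively prime subsets of $\frac{1}{d}X_d$ via the map $A\mapsto \frac{1}{d}A$. The key observations are: if $\gcd(A)=d$ then $d$ divides every element of $A$, forcing $A\subseteq X_d$ so that $\frac{1}{d}A$ is a well-defined subset of $\frac{1}{d}X_d$; and the identity $\gcd(A)=d\cdot\gcd\!\left(\frac{1}{d}A\right)$ shows that $\gcd(A)=d$ is equivalent to $\frac{1}{d}A$ being relatively prime. The inverse map $B\mapsto dB$ sends any relatively prime nonempty subset of $\frac{1}{d}X_d$ back into $X_d\subseteq X$ with gcd exactly $d$, confirming the bijection. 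Consequently the $d$-th class has cardinality $f\!\left(\frac{1}{d}X_d\right)$, and summing over $d\mid n$ yields the claimed identity.

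There is essentially no serious obstacle here; the argument is bookkeeping once one recognises the partition by exact gcd. The only minor subtlety is the edge case when $X_d=\emptyset$ for some $d\mid n$, in which case there is no $A\subseteq X$ with $\gcd(A)=d$ and the corresponding summand contributes $0$. This is consistent with adopting the convention $f(\emptyset)=0$, or equivalently restricting the outer sum to those $d\mid n$ with $X_d\neq\emptyset$; either reading leaves the stated formula unchanged.
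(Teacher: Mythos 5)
Your proof is correct and follows essentially the same route as the paper: partition the sets counted by $D(X,n)$ according to the exact value of $\gcd(A)$, which must be a divisor of $n$, and then biject the class with $\gcd(A)=d$ onto the relatively prime subsets of $\frac{1}{d}X_d$ via $A\mapsto \frac{1}{d}A$. Your remark about the convention $f(\emptyset)=0$ when $X_d=\emptyset$ is a sensible clarification that the paper leaves implicit.
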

\begin{proof}
 We begin with 
\begin{equation}\label{thm1eq1}
D\left(X,n\right) = \sum_{\substack{\emptyset \neq A \subseteq X\\\gcd \left(A\right)\mid n}} 1 = \sum_{d\mid n}\sum_{\substack{\emptyset \neq A \subseteq X\\\gcd \left(A\right) = d}}1
\end{equation}
The condition $\gcd \left(A\right)=d$ means that $d$ divides all elements of $A$ and $\gcd\left(\frac{1}{d}A\right)=1$. So $\emptyset\neq A\subseteq X$ and $\gcd\left(A\right)=d$ if and only if $\emptyset\neq A\subseteq X_d$ and $\gcd\left(\frac{1}{d}A\right)=1$. Therefore the innersum in (\ref{thm1eq1}) is equal to
$$
\sum_{\substack{\emptyset \neq A \subseteq X_d\\\gcd\left(\frac{1}{d}A\right)=1}} 1 = \sum_{\substack{\emptyset \neq \frac{1}{d}A \subseteq \frac{1}{d}X_d\\\gcd\left(\frac{1}{d}A\right)=1}} 1 = \sum_{\substack{\emptyset \neq B \subseteq \frac{1}{d}X_d\\\gcd\left(B\right)=1}} 1 = f\left(\frac{1}{d}X_d\right).
$$ 
Hence 
$$
D\left(X,n\right) = \sum_{d\mid n}f\left(\frac{1}{d}X_d\right).
$$
\end{proof}
\begin{corollary}
$D\left(n\right)$ is equal to the number of subsets $A$ of $\{1,2,\ldots,n\}$ such that $\gcd\left(A\right)\mid n$. In other words, $D\left(n\right) = D\left(\left\{1,2,\ldots,n\right\},n\right)$.
\end{corollary}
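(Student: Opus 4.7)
The plan is to apply Theorem \ref{combithm6} to the specific set $X=\{1,2,\ldots,n\}$ and verify that the right-hand side collapses to the divisor sum $D(n)=\sum_{d\mid n}f(d)$. First I would unwind the definitions: by Definition, $D(X,n)$ already counts precisely the nonempty subsets $A$ of $X$ whose gcd divides $n$, so the statement ``$D(n)$ equals the number of $A\subseteq\{1,2,\ldots,n\}$ with $\gcd(A)\mid n$'' is literally the claim $D(n)=D(\{1,2,\ldots,n\},n)$, and no separate combinatorial argument is needed beyond this identity.

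Next I would compute $X_d$ for each $d\mid n$. When $X=\{1,2,\ldots,n\}$ and $d\mid n$, we have $X_d=\{x\in X:d\mid x\}=\{d,2d,\ldots,n\}$, a set of exactly $n/d$ elements (here it is essential that $d$ divides $n$ so that $\lfloor n/d\rfloor=n/d$ and the last multiple of $d$ in $X$ is $n$ itself). Dividing through by $d$ gives $\tfrac{1}{d}X_d=\{1,2,\ldots,n/d\}$, so $f\bigl(\tfrac{1}{d}X_d\bigr)=f(n/d)$ by the definition of $f$.

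Plugging these into Theorem \ref{combithm6} yields
$$
D(\{1,2,\ldots,n\},n)=\sum_{d\mid n}f\!\left(\tfrac{1}{d}X_d\right)=\sum_{d\mid n}f(n/d)=\sum_{d\mid n}f(d)=D(n),
$$
where the last equality is the standard reindexing $d\mapsto n/d$ over the divisors of $n$. Combined with the definitional remark in the first paragraph, this establishes the corollary.

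There is no real obstacle: the only point requiring care is to note that $d\mid n$ forces the largest multiple of $d$ in $\{1,\ldots,n\}$ to be exactly $n$, so that $\tfrac{1}{d}X_d$ is the full initial segment $\{1,\ldots,n/d\}$ rather than some truncation. Without the divisibility hypothesis $d\mid n$ this step would fail, but since the outer sum in Theorem \ref{combithm6} runs only over divisors of $n$, it holds automatically.
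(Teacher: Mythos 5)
Your proof is correct and follows essentially the same route as the paper: apply Theorem \ref{combithm6} with $X=\{1,2,\ldots,n\}$, identify $\tfrac{1}{d}X_d$ with the initial segment $\{1,\ldots,n/d\}$, and reindex the divisor sum. The only difference is cosmetic — the paper writes $\lfloor n/d\rfloor$ throughout while you note explicitly that $d\mid n$ makes the floor unnecessary, which is a fair point of care but not a new idea.
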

\begin{proof}
Let $X = \left\{1,2,\ldots,n\right\}$. Then $X_d = \left\{d,2d,\ldots,\left\lfloor \frac{n}{d}\right\rfloor d\right\}$. Therefore $\frac{1}{d}X_d = \left\{1,2,\ldots,\left\lfloor \frac{n}{d}\right\rfloor\right\}$. By the definition, we see that 
$$
f\left(\frac{1}{d}X_d \right)= f\left(\left\{1,2,\ldots,\left\lfloor \frac{n}{d}\right\rfloor\right\}\right)= f\left(\left\lfloor \frac{n}{d}\right\rfloor\right).
$$
Then by Theorem \ref{combithm6}, we see that 
$$
D\left(X,n\right) = \sum_{d\mid n}f\left(\left\lfloor \frac{n}{d}\right\rfloor\right) = \sum_{d\mid n}f\left(d\right) = D\left(n\right).
$$
Therefore $D\left(n\right)$ is equal to the number of subsets $A$ of $\{1,2,\ldots,n\}$ such that $\gcd\left(A\right)\mid n$.
\end{proof}
\begin{theorem}
Let $d\left(n\right)$ be the number of positive divisors of $n$. Then $D\left(n\right)+d\left(n\right)+1\equiv 0\pmod 3$ for every $n\geq 1$.
\end{theorem}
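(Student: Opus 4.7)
The plan is to reduce the whole statement modulo $3$ using the two parts of Lemma \ref{AKnew}. First I would shift the index in Lemma \ref{AKnew}(i), writing it as $\Phi(m) = 2(f(m)-f(m-1))$ for every $m\geq 2$. Combining this with Lemma \ref{AKnew}(ii), which says $\Phi(m)\equiv 0\pmod 3$ for every $m\geq 3$, and using $\gcd(2,3)=1$, I conclude
$$
f(m)\equiv f(m-1)\pmod 3 \quad\text{for every } m\geq 3.
$$
Thus the sequence $f(m)\bmod 3$ is eventually constant starting from $m=2$.

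Next I would pin down the constant value. Since $f(2)=2$ (from Table \ref{tablerela1}, or directly by counting), induction gives $f(m)\equiv 2\pmod 3$ for every $m\geq 2$, while $f(1)=1$.

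With this in hand, the divisor sum evaluates cleanly modulo $3$. For $n\geq 2$ the divisor $1$ contributes $f(1)=1$ and each of the other $d(n)-1$ divisors contributes $2$ modulo $3$, so
$$
D(n) = f(1) + \sum_{\substack{d\mid n\\ d\geq 2}} f(d) \equiv 1 + 2\bigl(d(n)-1\bigr) \equiv 2d(n)-1 \pmod 3,
$$
whence $D(n)+d(n)+1 \equiv 3d(n) \equiv 0\pmod 3$. The case $n=1$ is a separate, trivial check: $D(1)+d(1)+1 = 1+1+1 = 3$.

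There is no serious obstacle here. The one small point worth being careful about is the division by $2$ in passing from $\Phi(m)\equiv 0\pmod 3$ to $f(m)-f(m-1)\equiv 0\pmod 3$, which is justified only because $2$ is invertible modulo $3$; aside from this, the argument is simply an accounting of the divisors of $n$ using the value $f(m)\bmod 3$ for $m\geq 2$.
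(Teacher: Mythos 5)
Your proposal is correct and follows essentially the same route as the paper: both deduce $f(m)\equiv f(m-1)\pmod 3$ from Lemma \ref{AKnew}, conclude $f(m)\equiv 2\pmod 3$ for $m\geq 2$, and then evaluate the divisor sum as $1+2(d(n)-1)\pmod 3$. Your extra remarks (invertibility of $2$ modulo $3$, the separate check at $n=1$) are fine but not needed, since the congruence $D(n)\equiv 1+2(d(n)-1)\pmod 3$ already holds at $n=1$ where the sum over divisors $d\geq 2$ is empty.
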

\begin{proof}
By Lemma \ref{AKnew}(i) and \ref{AKnew}(ii), we see that 
$$
f(n+1) \equiv f\left(n\right)\pmod 3\quad\text{for every $n\geq 2$}.
$$
This implies that $f\left(n\right)\equiv f(2)\equiv 2\pmod 3$ for every $n\geq 2$. Then
$$
D\left(n\right) = \sum_{d\mid n}f\left(d\right) = f(1)+\sum_{\substack{d\mid n\\d\geq 2}}f\left(d\right)\equiv 1+2(d\left(n\right)-1)\pmod3.
$$
This implies that $D\left(n\right)+d\left(n\right)+1 \equiv 0\pmod 3$.
\end{proof}

\textbf{Comments and Open Questions}
\begin{itemize}
	\item [1)] There is a small miscalculation in the formulas for $\Phi\left(n\right)$ and its generalizations in the literature. The right one is $\Phi\left(n\right) = \sum_{d\mid n}\mu\left(d\right)\left(2^{\frac{n}{d}}-1\right)$ (Lemma \ref{NaAK}(ii)) which corresponds to A038199 in Sloane's On-Line Encyclopedia of Integer Sequences \cite{Sl}. The wrong one is $\Phi\left(n\right) = \sum_{d\mid n}\mu\left(d\right)2^{\frac{n}{d}}$ which is usually referred to as A027375. At this moment (June 10, 2012) the sequence $\Phi\left(n\right)$ is put in the wrong place at A027375. The author will notify this to Professor Sloane or the database manager when this article is ready for publication. Fortunately, there is little danger since both sequences coincide for all $n \geq 2$. This is because we have the well known identity 
	$$
	\sum_{d\mid n}\mu\left(d\right) = \begin{cases}
	1\quad&\text{if $n=1$};\\
	0\quad&\text{if $n>1$}.
	\end{cases}
	$$
	\item [2)] The sequence $D\left(n\right)$ is new and will be submitted to Sloane's On-Line Encyclopedia of Integer Sequences \cite{Sl} soon.
	\item [3)] As suggested by the limits given in (\ref{relintrolimsupeq3}) to (\ref{relintrolimsupeq6}), on average, the sequence $D\left(n\right)$ lies closer to $2^n$ than $\Phi\left(n\right)$. But for certain $n$, $\Phi\left(n\right)$ may lie closer to $2^n$ than $D\left(n\right)$. Considering Table \ref{tablerela1} more carefully, we see that $\Phi\left(n\right)$ lie closer to $2^n$ for all odd $n$ from 5 to 15. Therefore 
	\begin{equation}\label{commenteq}
	\text{the sign of $D\left(n\right)-\Phi\left(n\right)$ is alternating for $4\leq n\leq 15$.}
	\end{equation}
	So natural question arises:
	\begin{itemize}
		\item [3.1] Does (\ref{commenteq}) holds for all $n\geq 4$? We check that (\ref{commenteq}) holds for $4\leq n\leq 30$. But we do not have a proof for $n\geq 31$. It is possible that (\ref{commenteq}) does not hold for some $n\geq 4$. In this case, we may ask a weaker question: 
		\item [3.2] Does $D\left(n\right)-\Phi\left(n\right)$ change sign infinitely often?
		
		Other possible research questions are the following:
		\item [3.3] Can we say something about $\limsup_{n\to\infty}\frac{2^n-D\left(n\right)}{2^n-\Phi\left(n\right)}$, $\liminf_{n\to\infty}\frac{2^n-D\left(n\right)}{2^n-\Phi\left(n\right)}$, $\sum_{n\leq N}\frac{2^n-D\left(n\right)}{2^n-\Phi\left(n\right)}$, or $\sum_{n\leq N}\frac{2^n-\Phi\left(n\right)}{2^n-D\left(n\right)}$?
		\item [3.4] Are $D\left(n\right)$ and $\Phi\left(n\right)$ a perfect power for some $n\geq 2$? (Ayad and Kihel \cite{AK} prove that $f\left(n\right)$ is never a square for $n\geq 2$).
		\item [3.5] Are the sequences $D\left(n\right)$ and $\Phi\left(n\right)$ periodic modulo a prime $p$? (Ayad and Kihel \cite{AK} show that the sequence $f\left(n\right)$ is not periodic modulo $p$ for any $p\neq 3$).
	\end{itemize}
\end{itemize}
\section{Acknowledgement}
The author recieves financial support from Faculty of Science, Silpakorn University, Thailand, contract number RGP 2555-07.

\end{document}